\begin{document}
\title[Existence results for Caputo type fractional differential equations]{Existence results for a class of Caputo type fractional differential equations with Riemann-Liouville fractional integrals and Caputo fractional derivatives in boundary conditions}
\author[F. Haddouchi]{Faouzi Haddouchi}
\address{
Department of Physics, University of Sciences and Technology of
Oran-MB, El Mnaouar, BP 1505, 31000 Oran, Algeria
\newline
And
\newline
Laboratory of Fundamental and Applied Mathematics of Oran (LMFAO)\\
University of Oran 1 Ahmed Benbella, 31000 Oran,
Algeria}
\email{fhaddouchi@gmail.com}
\subjclass[2000]{34A08, 34B15}
\keywords{Riemann-Liouville fractional integral; Caputo fractional derivative; Fractional-order differential equations; Existence; Fixed point theorem; Nonlocal boundary conditions.
}
\begin{abstract}
In this paper, we investigate the existence and uniqueness of solutions for a fractional
boundary value problem supplemented with nonlocal Riemann-Liouville fractional integral and Caputo fractional derivative boundary conditions. Our results are based on some known tools of fixed point theory. Finally, some illustrative examples are included to verify the validity of our results.
\end{abstract}

\maketitle \numberwithin{equation}{section}
\newtheorem{theorem}{Theorem}[section]
\newtheorem{lemma}[theorem]{Lemma}
\newtheorem{definition}[theorem]{Definition}
\newtheorem{proposition}[theorem]{Proposition}
\newtheorem{corollary}[theorem]{Corollary}
\newtheorem{remark}[theorem]{Remark}
\newtheorem{exmp}{Example}[section]

\section{Introduction}

Fractional calculus is a generalization of ordinary differentiation and integration
to arbitrary order. Fractional differential equations have recently proved to
be valuable tools in many fields, such as viscoelasticity, engineering, physics, chemistry, mechanics, and economics, see \cite{Meral}, \cite{Oldham}, \cite{Nigmatullin}, \cite{Orsingher}, \cite{Kilbas}.
In the recent years, there has been a significant development in ordinary and partial differential equations involving fractional derivatives, see the monographs of Kilbas et al. \cite{Kilbas}, Miller and Ross \cite{Miller},
\cite{Podlubny}, and the papers \cite{Bakakhani, Lakshmikantham, Zhou1, Zhou2, Kosmatov, Kaufmann, Bai} and the references therein.

Integral boundary conditions are encountered in various applications
such as population dynamics, blood flow models, chemical engineering, cellular systems, underground water flow, heat transmission, plasma physics, thermoelasticity, etc.
Nonlocal conditions come up when values of the function on the boundary is connected to values inside the domain.

Nonlocal conditions are found to be more plausible than the standard initial conditions for the
formulation of some physical phenomena in certain problems of thermodynamics, elasticity and wave
propagation. Further details can be found in the work by Byszewski \cite{Byszewski1, Byszewski2}.

In recent years, boundary value problems of fractional differential equations with Riemann-Liouville fractional integral and Caputo fractional derivative in boundary conditions have achieved great deal of interest and attention of several researchers. Many authors have studied the existence of solution of the fractional boundary value problems under various boundary conditions and by different approaches. We refer the readers to the papers \cite{Agarwal, Bashir1, Bashir2, Bashir3, Khan, Lakoud1, Lakoud2, Lakoud3, Li, Su, Sud, Tarib1, Tarib2, Yang}.

Very recently, Agarwal et al. \cite{Agarwal} studied the following fractional order boundary value problem

\begin{equation*}
^{c}D^{q}x(t)=f(t,x(t)),\ 1<q\leq 2, \ t\in[0,1],
\end{equation*}

supplemented by boundary conditions, of the form

\begin{equation*}
x(0)=\delta x(\sigma), \ a\ ^{c}D^{p}x(\zeta_{1})+b\ ^{c}D^{p}x(\zeta_{2})=\sum_{i=1}^{m-2}\alpha_{i}x(\beta_{i}), \ 0<p<1.
\end{equation*}

Together with the above fractional differential equation they also investigated the boundary conditions

\begin{equation*}
x(0)=\delta_{1}\int_{0}^{\sigma}x(s)ds, \ {a}\ ^{c}D^{p}x(\zeta_{1})+{b}\  ^{c}D^{p}x(\zeta_{2})=\sum_{i=1}^{m-2}\alpha_{i}x(\beta_{i}), \ 0<p<1,
\end{equation*}
where $^{c}D^{q}$,$^{c}D^{p}$ denote the Caputo fractional derivatives of orders $q, p$ and $f:[0,1]\times\mathbb{R}\longrightarrow\mathbb{R}$ is a given continuous
function and $\delta, \delta_{1}, a, b, \alpha_{i}\in \mathbb{R}$, with $0<\sigma<\zeta_{1}<\beta_{1}<\beta_{2}<...<\beta_{m-2}<\zeta_{2}<1.$ \\
The existence and uniqueness results were proved via some well known tools of the fixed point theory.

In \cite{Bashir3}, the authors studied the existence and uniqueness of solutions to the fractional differential equation with four-point nonlocal Riemann-Liouville fractional integral boundary conditions of different order given by

\[^{c}D^{q}x(t)=f(t,x(t)),\ 1<q\leq 2, \ t\in[0,1],\]
\\
\[x(0)=a\int_{0}^{\eta}\frac{(\eta-s)^{\beta-1}}{\Gamma(\beta)}x(s)ds,\ 0<\beta\leq 1,\]
\\
\[x(1)=b\int_{0}^{\sigma}\frac{(\sigma-s)^{\alpha-1}}{\Gamma(\alpha)}x(s)ds,\ 0<\alpha\leq 1,\]

where $^{c}D^{q}$ is the Caputo fractional derivative of order $q$, $f$ is a given continuous
function, and $a, b, \eta, \sigma$ are real constants with $0<\eta, \sigma <1$, by using some
fixed point theorems.

Bashir et al. in \cite{Bashir2} discusses the existence and uniqueness of solutions of a new class of  fractional boundary value problems

\[^{c}D^{q}x(t)=f(t,x(t)), \ t\in[0,1], \ q\in(1,2],\]\\
\[x(0)=0, \ x(\xi)=a\int_{\eta}^{1}x(s)ds,\]
where $^{c}D^{q}$ denotes the Caputo fractional derivative of order $q$, $f$ is a given continuous function, and $a$ is a positive real constant, $\xi \in (0,1)$ with $\xi<\eta<1$.
The existence results are obtained with the aid of some classical fixed point theorems.

In \cite{Sud}, Sudsutad and Tariboon studied the existence and uniqueness of solutions for a boundary value problem of fractional order differential equation with three-point fractional integral boundary conditions given by

\[^{c}D^{q}x(t)=f(t,x(t)), \ t\in[0,1], \ q\in(1,2],\]\\
\[x(0)=0, \ x(1)={\alpha}\int_{0}^{\eta}\frac{(\eta-s)^{p-1}}{\Gamma(p)}x(s)ds,\ 0<\eta< 1,\ p>0, \]

where $^{c}D^{q}$ denotes the Caputo fractional derivative of order $q$, $f:[0,1]\times\mathbb{R}\longrightarrow\mathbb{R}$ is a continuous function and $\alpha \in \mathbb{R}$ is such that $\alpha\neq\ \Gamma(p+2)/\eta^{p+1}$.

Tariboon et al. \cite{Tarib2} have also studied the following fractional boundary value problem with three-point nonlocal Riemann-Liouville integral boundary conditions

\[D^{\alpha}x(t)=f(t,x(t)), \ 0<t<T, \ \alpha \in(1,2],\]\\
\[x(\eta)=0, \ I^{\nu}x(T)=\int_{0}^{T}\frac{(T-s)^{\nu-1}}{\Gamma(\nu)}x(s)ds=0,\]
where $D^{\alpha}$ denotes the Riemann-Liouville fractional derivative of order $\alpha>0$, $\eta \in(0, T)$ is a given constant.
The existence and uniqueness results were proved via the Banach contraction principle, the Banach's fixed point theorem and H\"{o}lder's inequality, the Krasnoselskii fixed point theorem and the Leray-Schauder nonlinear alternative.

In this paper, we introduce a new class of boundary value problems of fractional differential equations supplemented  with nonlocal Riemann-Liouville fractional integral and Caputo fractional derivative boundary conditions. In precise terms, we consider the following nonlocal problems:

\begin{equation} \label{eq-1.1}
^{c}D^{q}x(t)=f(t,x(t)),\ t\in[0,1],
\end{equation}

which includes one Caputo type fractional derivative, supplemented by boundary conditions
consisting of one fractional derivative of Caputo type and one Riemann-Liouville fractional integral

\begin{equation} \label{eq-1.2}
x^{\prime}(\xi)=\beta \ ^{c}D^{\nu}x(\eta), \ x(1)=\alpha [I^{p}x](\eta),
\end{equation}

where $^{c}D^{\mu}$ is the Caputo fractional derivative of order $\mu\in\{q,\nu\}$ such that $1<q\leq2$, $0<\nu\leq1$,  $I^{p}$ is the Riemann-Liouville fractional integral of order $p>0$, and $f:[0,1]\times\mathbb{R}\longrightarrow\mathbb{R}$ is a given continuous function, $0\leq\xi<\eta<1$ and $\alpha$, $\beta$ are appropriate real constants.

The  boundary conditions in \eqref{eq-1.2} implies that the value of the derivative of the unknown function at the nonlocal position $\xi$ is proportional to the value of the fractional derivative of the unknown function at the nonlocal position $\eta$, while the value of the unknown function at the right end point $(t=1)$ of the interval $[0,1]$ is proportional to the value of the  fractional integral of the unknown function at the nonlocal position $\eta$.

Motivated by the above recent works, the aim of this paper is to investigate the existence and uniqueness of solutions for the problem \eqref{eq-1.1}-\eqref{eq-1.2}.

Our analysis relies on some known fixed point theorems.

\section{Preliminaries\label{sec:2}}

In this section, we recall some basic definitions of fractional calculus and an auxiliary lemma to define the solution for the problem \eqref{eq-1.1}-\eqref{eq-1.2} is presented.

\begin{definition}
The Riemann-Liouville fractional integral of order $q$ for a continuous function $f$ is defined as
\begin{equation*}
I^{q}f(t)=\frac{1}{\Gamma(q)}\int_{0}^{t}\frac{f(s)}{(t-s)^{1-q}}ds, \ q>0,
\end{equation*}
provided the integral exists, where $\Gamma(.)$ is the gamma function, which is defined by $\Gamma(x)=\int_{0}^{\infty}t^{x-1}e^{-t}dt$.
\end{definition}

\begin{definition}
For at least n-times continuously differentiable function $f:[0,\infty)\rightarrow \mathbb{R}$, the Caputo derivative of fractional order $q$ is defined as
\begin{equation*}
^{c}D^{q}f(t)=\frac{1}{\Gamma(n-q)}\int_{0}^{t}\frac{f^{(n)}(s)}{(t-s)^{q+1-n}}ds,\ n-1<q<n,\ n=[q]+1,
\end{equation*}
where $[q]$ denotes the integer part of the real number $q$.
\end{definition}

\begin{lemma}[\cite{Kilbas}] \label{lem 2.1}
For $q > 0$, the general solution of the fractional differential equation $^{c}D^{q}x(t)=0$ is
given by
\begin{equation*}
x(t)=c_{0}+c_{1}t+...+c_{n-1}t^{n-1},
\end{equation*}
where $c_{i}\in \mathbb{R}$, $i = 0, 1,...,n-1 \ (n = [q] + 1)$.
\end{lemma}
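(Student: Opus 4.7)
The plan is to prove both inclusions: first verify that every polynomial of degree at most $n-1$ solves $^{c}D^{q}x(t)=0$, and then show conversely that any solution must have this form.

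For the easy direction, I would start by observing that the Caputo derivative, as given in the preceding definition, can be rewritten as $^{c}D^{q}x(t)=I^{n-q}x^{(n)}(t)$. If $x(t)=c_{0}+c_{1}t+\cdots+c_{n-1}t^{n-1}$, then $x^{(n)}(t)\equiv 0$, so $I^{n-q}x^{(n)}(t)\equiv 0$ and hence $^{c}D^{q}x(t)=0$. This shows that every polynomial of degree at most $n-1$ lies in the solution set.

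For the nontrivial direction, I would take an arbitrary (sufficiently smooth) solution $x$ of $^{c}D^{q}x(t)=0$. Using the rewrite $^{c}D^{q}x(t)=I^{n-q}x^{(n)}(t)$, the equation becomes the Abel-type integral equation
\begin{equation*}
\frac{1}{\Gamma(n-q)}\int_{0}^{t}(t-s)^{n-q-1}x^{(n)}(s)\,ds=0,\qquad t\in[0,1].
\end{equation*}
Here I would invoke injectivity of the Riemann--Liouville operator $I^{n-q}$ on continuous functions, which can be established by applying the Riemann--Liouville derivative $D^{n-q}$ (a left inverse of $I^{n-q}$) to both sides, or equivalently by the classical Abel inversion. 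This yields $x^{(n)}(t)\equiv 0$, and $n$ successive integrations produce a polynomial $x(t)=c_{0}+c_{1}t+\cdots+c_{n-1}t^{n-1}$, with the constants $c_{k}$ corresponding to $x^{(k)}(0)/k!$.

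The main subtlety lies in passing from the identity $I^{n-q}x^{(n)}\equiv 0$ to the pointwise conclusion $x^{(n)}\equiv 0$; this is exactly the injectivity of a fractional integral operator of order $n-q\in(0,1]$. An alternative, equally clean route is to apply $I^{q}$ directly to both sides of $^{c}D^{q}x=0$ and use the standard composition identity
\begin{equation*}
I^{q}\,{}^{c}D^{q}x(t)=x(t)-\sum_{k=0}^{n-1}\frac{x^{(k)}(0)}{k!}t^{k},
\end{equation*}
from which the desired polynomial form is immediate upon setting $c_{k}=x^{(k)}(0)/k!$. Either way, the result as stated follows, since the arbitrariness of the $c_{k}$'s on one side and their identification with initial Taylor coefficients on the other give a complete parametrization of the kernel of $^{c}D^{q}$.
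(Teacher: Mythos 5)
The paper offers no proof of this lemma: it is quoted directly from Kilbas et al.\ \cite{Kilbas}, so there is no in-paper argument to compare yours against. Your proof is nevertheless correct and standard. The forward inclusion is immediate, exactly as you say, from the rewriting $^{c}D^{q}x=I^{n-q}x^{(n)}$ together with the vanishing of the $n$-th derivative of a polynomial of degree at most $n-1$. For the converse, both of your routes work. The first reduces the problem to the injectivity of $I^{n-q}$ (order $n-q\in(0,1]$) on continuous functions; this is the genuine content of the lemma, and you correctly isolate it as the one step needing justification --- Abel inversion, or composing on the left with the Riemann--Liouville derivative $D^{n-q}$, settles it, after which $x^{(n)}\equiv 0$ and $n$ integrations finish the job. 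The second route, via $I^{q}\,{}^{c}D^{q}x(t)=x(t)-\sum_{k=0}^{n-1}x^{(k)}(0)\,t^{k}/k!$, is shorter, but note that in this paper that identity is presented as a \emph{consequence} of Lemma \ref{lem 2.1} (in the display immediately following it); if you take that route you must establish the composition identity independently, e.g.\ from $I^{q}\,{}^{c}D^{q}x=I^{q}I^{n-q}x^{(n)}=I^{n}x^{(n)}$ and the Taylor formula with integral remainder, to avoid circularity. With that caveat, either version is a complete proof under the smoothness hypothesis ($x\in C^{n}$) implicit in the paper's definition of the Caputo derivative.
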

According to Lemma \ref{lem 2.1}, it follows that
\begin{equation*}
{I^{q}}\ {^{c}D^{q}}x(t)=x(t)+c_{0}+c_{1}t+...+c_{n-1}t^{n-1},
\end{equation*}
for some $c_{i}\in \mathbb{R}$, $i = 0, 1,...,n-1 \ (n = [q] + 1)$.

\begin{lemma}[\cite{Podlubny}, \cite{Kilbas}]\label{lem 2.2}
If $\beta> \alpha>0$ and $x \in L_{1}[0,1]$, then
\item[(i)] ${^{c}D^{\alpha}}\ {I^{\beta}}x(t)={I^{\beta-\alpha}}x(t)$, holds almost everywhere on $[0,1]$ and it is valid at any point $t\in [0,1]$ if $x\in C[0,1]$;\  ${^{c}D^{\alpha}}\ {I^{\alpha}}x(t)=x(t) $, for all \ $t\in[0,1]$.
\item[(ii)] ${^{c}D^{\alpha}}t^{\lambda-1}=\frac{\Gamma(\lambda)}{\Gamma(\lambda-\alpha)}t^{\lambda-\alpha-1}$,
$\lambda>[\alpha]$ \ and \ ${^{c}D^{\alpha}}t^{\lambda-1}=0$, \  $\lambda<[\alpha]$.
\end{lemma}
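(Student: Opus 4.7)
The plan for part (i) is to reduce everything to the composition rules of Riemann-Liouville operators. Setting $n=[\alpha]+1$, the Caputo derivative factorises as ${}^{c}D^{\alpha}={I^{n-\alpha}}\!\circ\! D^{n}$, so ${}^{c}D^{\alpha}I^{\beta}x=I^{n-\alpha}D^{n}I^{\beta}x$. The task is therefore to identify $D^{n}I^{\beta}x$ with $I^{\beta-n}x$ in the appropriate sense. When $\beta\geq n$ the semigroup law $I^{\beta}=I^{n}I^{\beta-n}$ combined with $D^{n}I^{n}y=y$ (differentiating an $n$-fold integral) gives the identity immediately. When $\alpha<\beta<n$, I would bypass pointwise differentiation by passing through the Riemann-Liouville derivative $D^{\alpha}$: one uses the representation ${}^{c}D^{\alpha}y(t)=D^{\alpha}\!\bigl(y(t)-\sum_{k=0}^{n-1}\tfrac{y^{(k)}(0)}{k!}t^{k}\bigr)$ together with the classical identity $D^{\alpha}I^{\beta}=I^{\beta-\alpha}$ for RL operators. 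The key observation is that $(I^{\beta}x)^{(k)}(0)=0$ for $0\leq k\leq n-1$ whenever $\beta>\alpha$, so the polynomial correction drops out and one gets ${}^{c}D^{\alpha}I^{\beta}x=I^{n-\alpha}I^{\beta-n}x=I^{\beta-\alpha}x$ by the semigroup property. The limiting case $\beta=\alpha$ is just the inversion identity, which follows from the same chain with $I^{0}=\mathrm{id}$.

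For part (ii), I would simply compute from the definitions. The $n$-th classical derivative gives $\frac{d^{n}}{dt^{n}}t^{\lambda-1}=\frac{\Gamma(\lambda)}{\Gamma(\lambda-n)}t^{\lambda-n-1}$ when $\lambda-1>n-1$, and it is identically zero when $\lambda-1$ is a non-negative integer strictly less than $n$, which accounts for the vanishing assertion. Plugging the non-trivial case into the Caputo integral yields
\[
{}^{c}D^{\alpha}t^{\lambda-1}=\frac{1}{\Gamma(n-\alpha)}\frac{\Gamma(\lambda)}{\Gamma(\lambda-n)}\int_{0}^{t}(t-s)^{n-\alpha-1}s^{\lambda-n-1}\,ds.
\]
The change of variables $s=tu$ reduces the integral to a beta function $\int_{0}^{1}u^{\lambda-n-1}(1-u)^{n-\alpha-1}\,du=\frac{\Gamma(\lambda-n)\Gamma(n-\alpha)}{\Gamma(\lambda-\alpha)}$, and the $\Gamma(\lambda-n)$ and $\Gamma(n-\alpha)$ factors cancel, producing the advertised $\frac{\Gamma(\lambda)}{\Gamma(\lambda-\alpha)}t^{\lambda-\alpha-1}$.

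The main obstacle is the $L_{1}$ regularity issue in part (i): when $\beta<n$, one cannot differentiate $I^{\beta}x$ pointwise $n$ times, so the reduction $D^{n}I^{\beta}x=I^{\beta-n}x$ must be interpreted distributionally or circumvented. Routing through the RL derivative, as above, is the cleanest way to avoid this, because the semigroup law $I^{a}I^{b}x=I^{a+b}x$ for $a,b>0$ and $x\in L_{1}[0,1]$ is a straightforward Fubini computation, whereas $D^{\alpha}I^{\beta}=I^{\beta-\alpha}$ is a standard fact about the Riemann-Liouville calculus. Once that is in hand, the hypothesis $\beta>\alpha$ is seen to play exactly the role of forcing the Caputo boundary terms $(I^{\beta}x)^{(k)}(0)$ to vanish, which is what makes the identity collapse to the integral semigroup.
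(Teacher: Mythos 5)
The paper offers no proof of this lemma: it is quoted verbatim from the cited references \cite{Podlubny} and \cite{Kilbas}, so there is no in-paper argument to compare against. Your proposal is essentially the standard derivation found in those sources and is correct. For (i), factoring ${}^{c}D^{\alpha}=I^{n-\alpha}D^{n}$ when $\beta\geq n$, and otherwise routing through the Riemann--Liouville identity ${}^{c}D^{\alpha}y=D^{\alpha}\bigl(y-\sum_{k=0}^{n-1}\tfrac{y^{(k)}(0)}{k!}t^{k}\bigr)$ together with $D^{\alpha}I^{\beta}=I^{\beta-\alpha}$, is exactly how Kilbas et al.\ handle the $L_{1}$ case; your observation that $\beta>\alpha>n-1\geq k$ forces $(I^{\beta}x)^{(k)}(0)=I^{\beta-k}x(0)=0$ is the right mechanism, and the ``almost everywhere'' qualifier in the statement absorbs the remaining pointwise issues for mere $L_{1}$ data. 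For (ii), the beta-function computation is correct. One point worth flagging: your calculation genuinely requires $\lambda>n=[\alpha]+1$ (otherwise $s^{\lambda-n-1}$ is not integrable at the origin and $\Gamma(\lambda-n)$ misbehaves), and the vanishing assertion holds when $\lambda-1$ is a non-negative integer below $n$; the lemma's stated conditions $\lambda>[\alpha]$ and $\lambda<[\alpha]$ are looser than what the proof supports, a known imprecision in how this result is often transcribed. You implicitly correct this, which is the right call.
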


\begin{lemma}\label{lem 2.3}
Let $\alpha\neq\frac{\Gamma(p+1)}{\eta^{p}}$ and  $\beta\neq\frac{\Gamma(2-\nu)}{\eta^{1-\nu}}$. Then, for any $h\in C([0, 1], \mathbb{R})$, the linear fractional boundary value problem
\begin{equation} \label{eq-2.1}
^{c}D^{q}x(t)=h(t),\ 0<t<1, \ 1<q\leq2,
\end{equation}
\begin{equation} \label{eq-2.2}
x^{\prime}(\xi)=\beta \ ^{c}D^{\nu}x(\eta), \ x(1)=\alpha [I^{p}x](\eta),\ 0<\nu\leq1,
\end{equation}
has an integral solution given by
\begin{eqnarray} \label{eq-2.3}
x(t)&=&\int_{0}^{t}\frac{(t-s)^{q-1}}{\Gamma(q)}h(s)ds-\frac{\Gamma(p+1)}{\Delta_{1}}\int_{0}^{1}\frac{(1-s)^{q-1}}{\Gamma(q)}h(s)ds \nonumber\\
&&+\frac{\Gamma(2-\nu)\Big((p+1)\Delta_{1}t+\Delta_{2}\Big)}{(p+1)\Delta_{1}\Delta_{3}}\int_{0}^{\xi}\frac{(\xi-s)^{q-2}}{\Gamma(q-1)}h(s)ds \nonumber\\
&&-\frac{\beta\Gamma(2-\nu)\Big((p+1)\Delta_{1}t+\Delta_{2}\Big)}{(p+1)\Delta_{1}\Delta_{3}}\int_{0}^{\eta}\frac{(\eta-s)^{q-\nu-1}}{\Gamma(q-\nu)}h(s)ds \nonumber\\
&&+\frac{\alpha p}{\Delta_{1}}\int_{0}^{\eta}\int_{0}^{s}\frac{(\eta-s)^{p-1}(s-\tau)^{q-1}}{\Gamma(q)}h(\tau)d\tau ds,
\end{eqnarray}
where
\begin{align} \label{eq-2.4}
\Delta_{1}&=\Gamma(p+1)-\alpha \eta^{p} \nonumber\\
\Delta_{2}&=\alpha \eta^{p+1}-\Gamma(p+2)\\
\Delta_{3}&=\beta \eta^{1-\nu}-\Gamma(2-\nu). \nonumber
\end{align}

\end{lemma}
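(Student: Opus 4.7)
The plan is standard: integrate the equation to produce a two-parameter family of candidate solutions, then determine the two constants from the two boundary conditions.

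First I would apply the Riemann--Liouville integral $I^{q}$ to both sides of \eqref{eq-2.1}. By the remark following Lemma \ref{lem 2.1}, and using that $n=[q]+1=2$, this gives
\begin{equation*}
x(t)=\int_{0}^{t}\frac{(t-s)^{q-1}}{\Gamma(q)}h(s)\,ds-c_{0}-c_{1}t,
\end{equation*}
for real constants $c_{0},c_{1}$ to be determined. Differentiating under the integral yields $x'(t)=I^{q-1}h(t)-c_{1}$, and Lemma \ref{lem 2.2}(ii) gives $^{c}D^{\nu}x(t)=I^{q-\nu}h(t)-c_{1}t^{1-\nu}/\Gamma(2-\nu)$ (the constant $c_{0}$ is killed by $^{c}D^{\nu}$).

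Next I would substitute into the first boundary condition $x'(\xi)=\beta\,{^{c}D^{\nu}}x(\eta)$. Collecting the $c_{1}$ terms produces the factor $\beta\eta^{1-\nu}/\Gamma(2-\nu)-1$, which after multiplying through by $\Gamma(2-\nu)$ becomes exactly $\Delta_{3}$. The hypothesis $\beta\neq\Gamma(2-\nu)/\eta^{1-\nu}$ is precisely what makes this nonzero, and one solves
\begin{equation*}
c_{1}=\frac{\Gamma(2-\nu)}{\Delta_{3}}\left[\beta\int_{0}^{\eta}\frac{(\eta-s)^{q-\nu-1}}{\Gamma(q-\nu)}h(s)\,ds-\int_{0}^{\xi}\frac{(\xi-s)^{q-2}}{\Gamma(q-1)}h(s)\,ds\right].
\end{equation*}

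For the second condition $x(1)=\alpha[I^{p}x](\eta)$, I would compute $[I^{p}x](\eta)$ by linearity, using $\int_{0}^{\eta}(\eta-s)^{p-1}ds/\Gamma(p)=\eta^{p}/\Gamma(p+1)$ and $\int_{0}^{\eta}(\eta-s)^{p-1}s\,ds/\Gamma(p)=\eta^{p+1}/\Gamma(p+2)$. Substituting $x(s)=I^{q}h(s)-c_{0}-c_{1}s$ yields
\begin{equation*}
[I^{p}x](\eta)=\int_{0}^{\eta}\!\!\int_{0}^{s}\frac{(\eta-s)^{p-1}(s-\tau)^{q-1}}{\Gamma(p)\Gamma(q)}h(\tau)\,d\tau\,ds-\frac{c_{0}\eta^{p}}{\Gamma(p+1)}-\frac{c_{1}\eta^{p+1}}{\Gamma(p+2)}.
\end{equation*}
Matching with $x(1)=I^{q}h(1)-c_{0}-c_{1}$ and multiplying through by $\Gamma(p+1)$, the coefficient of $c_{0}$ becomes $\alpha\eta^{p}-\Gamma(p+1)=-\Delta_{1}$ (nonzero by the other hypothesis), and the coefficient of $c_{1}$ collapses, via $\Gamma(p+1)/\Gamma(p+2)=1/(p+1)$, to $-\Delta_{2}/(p+1)$. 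Solving for $c_{0}$ and using $\alpha\Gamma(p+1)/\Gamma(p)=\alpha p$ gives
\begin{equation*}
c_{0}=\frac{\Gamma(p+1)}{\Delta_{1}}\int_{0}^{1}\frac{(1-s)^{q-1}}{\Gamma(q)}h(s)\,ds-\frac{\alpha p}{\Delta_{1}}\int_{0}^{\eta}\!\!\int_{0}^{s}\frac{(\eta-s)^{p-1}(s-\tau)^{q-1}}{\Gamma(q)}h(\tau)\,d\tau\,ds+\frac{\Delta_{2}}{(p+1)\Delta_{1}}c_{1}.
\end{equation*}

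Finally, I would insert these $c_{0},c_{1}$ back into $x(t)=I^{q}h(t)-c_{0}-c_{1}t$. The terms linear in $c_{1}$ combine as $-c_{1}(\Delta_{2}/((p+1)\Delta_{1})+t)=-c_{1}(\Delta_{2}+(p+1)\Delta_{1}t)/((p+1)\Delta_{1})$, which, after substituting the expression for $c_{1}$, produces the two middle lines of \eqref{eq-2.3}. The remaining terms match the first and last lines of \eqref{eq-2.3} directly. The main obstacle is purely clerical bookkeeping of the Gamma-function identities ensuring the coefficients assemble into $\Delta_{1}$, $\Delta_{2}$, $\Delta_{3}$; conceptually the argument is just algebra following standard fractional-calculus facts from Lemmas \ref{lem 2.1}--\ref{lem 2.2}. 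Conversely, a direct verification that the formula \eqref{eq-2.3} satisfies \eqref{eq-2.1}--\eqref{eq-2.2} (again using Lemma \ref{lem 2.2}) completes the proof.
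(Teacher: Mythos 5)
Your proposal is correct and follows essentially the same route as the paper: apply $I^{q}$ to obtain $x(t)=I^{q}h(t)-c_{0}-c_{1}t$, solve for $c_{1}$ from the derivative condition (producing $\Delta_{3}$), then for $c_{0}$ from the integral condition (producing $\Delta_{1}$ and $\Delta_{2}$), and substitute back; your intermediate expressions for $c_{1}$ and $c_{0}$ agree with the paper's. The only cosmetic difference is that you cite Lemma \ref{lem 2.2}(ii) alone for computing $^{c}D^{\nu}x$, whereas both parts (i) and (ii) are used there, and you append a converse-verification remark that the paper omits.
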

\begin{proof}
From Lemma \ref{lem 2.1}, we may reduce \eqref{eq-2.1} to an equivalent integral equation
\begin{equation} \label{eq-2.5}
x(t)=I^{q}h(t)-c_{0}-c_{1}t,
\end{equation}
where $c_{0}, \ c_{1}\in \mathbb{R}$ are arbitrary constants. Consequently, the general solution of equation \eqref{eq-2.1} is
\begin{equation} \label{eq-2.6}
x(t)=\frac{1}{\Gamma(q)}\int_{0}^{t}(t-s)^{q-1}h(s)ds-c_{0}-c_{1}t,
\end{equation}
and
\begin{equation} \label{eq-2.7}
{x'}(t)=I^{q-1}h(t)-c_{1}.
\end{equation}
Now, in view of Lemma \ref{lem 2.2}, by taking the Caputo fractional derivative of order $\nu$ to both sides of \eqref{eq-2.6}, we get
\begin{equation} \label{eq-2.8}
^{c}D^{\nu}x(t)=I^{q-\nu}h(t)-c_{1}\frac{t^{1-\nu}}{\Gamma(2-\nu)}.
\end{equation}
From \eqref{eq-2.7} and \eqref{eq-2.8}, the boundary condition $x^{\prime}(\xi)=\beta \ ^{c}D^{\nu}x(\eta)$ implies that
\begin{equation*}
\frac{1}{\Gamma(q-1)}\int_{0}^{\xi}(\xi-s)^{q-2}h(s)ds-c_{1}=\beta\Big([I^{q-\nu}h](\eta)-c_{1}\frac{\eta^{1-\nu}}{\Gamma(2-\nu)}\Big),
\end{equation*}
which, on solving, yields

\begin{equation*}
c_{1}=\frac{\Gamma(2-\nu)}{\Delta_{3}}\Bigg(\beta \int_{0}^{\eta}\frac{(\eta-s)^{q-\nu-1}}{\Gamma(q-\nu)}h(s)ds
-\int_{0}^{\xi}\frac{(\xi-s)^{q-2}}{\Gamma(q-1)}h(s)ds\Bigg).
\end{equation*}

Using the Riemann-Liouville fractional integral of order $p$ for \eqref{eq-2.6}, we obtain
\begin{gather*}
\begin{aligned}
I^{p}x(t)&=\frac{1}{\Gamma(p)}\int_{0}^{t}(t-s)^{p-1}\Big(\frac{1}{\Gamma(q)}\int_{0}^{s}(s-\tau)^{q-1}h(\tau)d\tau-c_{0}-c_{1}s\Big)ds\\
&=\frac{1}{\Gamma(p)\Gamma(q)}\int_{0}^{t}\int_{0}^{s}(t-s)^{p-1}(s-\tau)^{q-1}h(\tau)d\tau ds-c_{0}\frac{t^{p}}{\Gamma(p+1)}-c_{1}\frac{t^{p+1}}{\Gamma(p+2)}.
\end{aligned}
\end{gather*}
The second condition of \eqref{eq-1.1} implies that

\begin{multline} \label{eq-2.9}
\frac{\alpha}{\Gamma(p)\Gamma(q)}\int_{0}^{\eta}\int_{0}^{s}(\eta-s)^{p-1}(s-\tau)^{q-1}h(\tau)d\tau ds-c_{0}\frac{\alpha \eta^{p}}{\Gamma(p+1)}-c_{1}\frac{\alpha  \eta^{p+1}}{\Gamma(p+2)}
\\
= \frac{1}{\Gamma(q)}\int_{0}^{1}(1-s)^{q-1}h(s)ds-c_{0}-c_{1},
\end{multline}

which, on inserting the value of $c_{1}$ in \eqref{eq-2.9}, we obtain

\begin{eqnarray}
c_{0} & = &\frac{1}{\Gamma(q)\Delta_{1}}\bigg\{\Gamma(p+1)\int_{0}^{1}(1-s)^{q-1}h(s)ds \nonumber\\
&&- \alpha p \int_{0}^{\eta}\int_{0}^{s}(\eta-s)^{p-1}(s-\tau)^{q-1}h(\tau)d\tau ds\bigg\} \nonumber\\
&&+ \frac{\Gamma(2-\nu)\Delta_{2}}{(p+1)\Delta_{1}\Delta_{3}}\bigg\{ \beta\int_{0}^{\eta}\frac{(\eta-s)^{q-\nu-1}}{\Gamma(q-\nu)}h(s)ds
\nonumber\\
&&- \int_{0}^{\xi}\frac{(\xi-s)^{q-2}}{\Gamma(q-1)}h(s)ds\bigg\}.
\end{eqnarray}

Substituting the values of $c_{0}$ and $c_{1}$ in \eqref{eq-2.6} we obtain the solution \eqref{eq-2.3}. This completes the proof.
\end{proof}

\section{Existences results\label{sec3}}
In this section, we establish sufficient conditions for the existence of solutions to the fractional order boundary value problem (1.1)-(1.2) using certain fixed point theorems.

Let $\mathcal{C}$ be the Banach space of all continuous functions from $[0,1]$ into $\mathbb{R}$ equipped
with the norm: $\|x\|=\sup\{|x(t)|, t\in[0,1 ]\}$. We define the operator $\mathfrak{S} : \mathcal{C}\rightarrow\mathcal{C}$ by
\begin{eqnarray} \label{eq-3.1}
(\mathfrak{S}x)(t)&=&\int_{0}^{t}\frac{(t-s)^{q-1}}{\Gamma(q)}f(s,x(s))ds-\frac{\Gamma(p+1)}{\Delta_{1}}\int_{0}^{1}\frac{(1-s)^{q-1}}{\Gamma(q)}f(s,x(s))ds \nonumber\\
&&+\frac{\Gamma(2-\nu)\Big((p+1)\Delta_{1}t+\Delta_{2}\Big)}{(p+1)\Delta_{1}\Delta_{3}}\int_{0}^{\xi}\frac{(\xi-s)^{q-2}}{\Gamma(q-1)}f(s,x(s))ds \nonumber\\
&&-\frac{\beta\Gamma(2-\nu)\Big((p+1)\Delta_{1}t+\Delta_{2}\Big)}{(p+1)\Delta_{1}\Delta_{3}}\int_{0}^{\eta}\frac{(\eta-s)^{q-\nu-1}}{\Gamma(q-\nu)}f(s,x(s))ds \nonumber\\
&&+\frac{\alpha p}{\Delta_{1}}\int_{0}^{\eta}\int_{0}^{s}\frac{(\eta-s)^{p-1}(s-\tau)^{q-1}}{\Gamma(q)}f(\tau,x(\tau))d\tau ds.
\end{eqnarray}
Obviously, the fixed points of the operator $\mathfrak{S}$ are the solutions of the fractional order boundary value
problem (1.1)-(1.2).

In order to prove our main results, the following well known fixed point theorems are needed.

\begin{theorem} [\cite{Gran}] \label{thm 3.1}
Let $(X,d)$ be a complete metric space and $T:X\rightarrow X$ be a contraction. Then $T$ has a unique fixed point in $X$.
\end{theorem}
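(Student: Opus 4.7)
The plan is to construct a fixed point as the limit of the Picard orbit of $T$ from an arbitrary starting point, and then extract uniqueness from the contraction inequality alone. Let $\lambda \in [0,1)$ denote the contraction constant, so that $d(Tx,Ty)\le \lambda\, d(x,y)$ for all $x,y\in X$.

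First I would fix any $x_0\in X$ and iterate, defining $x_{n+1}=Tx_n$. A trivial induction on the contraction inequality gives $d(x_{n+1},x_n)\le \lambda^n d(x_1,x_0)$, and then the triangle inequality together with the geometric series produces, for $m>n$,
\begin{equation*}
d(x_m,x_n)\le \sum_{k=n}^{m-1} d(x_{k+1},x_k)\le \frac{\lambda^n}{1-\lambda}\, d(x_1,x_0),
\end{equation*}
which tends to $0$ as $n\to\infty$. Hence $(x_n)$ is Cauchy, and by completeness of $(X,d)$ it converges to some $x^{*}\in X$.

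Next I would verify that $x^{*}$ is a fixed point. The cleanest way is to bypass any separate appeal to continuity by writing
\begin{equation*}
d(Tx^{*},x^{*})\le d(Tx^{*},Tx_n)+d(x_{n+1},x^{*})\le \lambda\, d(x^{*},x_n)+d(x_{n+1},x^{*}),
\end{equation*}
and letting $n\to\infty$ to conclude $d(Tx^{*},x^{*})=0$, i.e.\ $Tx^{*}=x^{*}$. Uniqueness is then immediate: if $y^{*}$ also satisfies $Ty^{*}=y^{*}$, then $d(x^{*},y^{*})=d(Tx^{*},Ty^{*})\le \lambda\, d(x^{*},y^{*})$, which forces $d(x^{*},y^{*})=0$ since $\lambda<1$.

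Since the statement is quoted verbatim from Granas--Dugundji, there is no genuine obstacle here; the only subtle point worth flagging is avoiding a circular invocation of continuity of $T$, which the estimate above sidesteps. In the context of this paper the real use of the theorem will be downstream, applied to the operator $\mathfrak{S}$ defined in \eqref{eq-3.1}, where the actual work lies in bounding $\|\mathfrak{S}x-\mathfrak{S}y\|$ in terms of $\|x-y\|$ using the integrals in \eqref{eq-3.1} and the constants $\Delta_1,\Delta_2,\Delta_3$ from \eqref{eq-2.4}.
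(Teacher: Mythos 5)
Your proof is the standard and correct Picard-iteration argument for the Banach contraction principle, including the clean estimate $d(Tx^{*},x^{*})\le \lambda\, d(x^{*},x_n)+d(x_{n+1},x^{*})$ that avoids a separate appeal to continuity of $T$. The paper itself offers no proof of this statement --- it is quoted with a citation to Granas--Dugundji and used as a black box in Theorem \ref{thm 3.4} --- so there is nothing to compare against; your argument fills that gap correctly.
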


\begin{theorem} [\cite{Smart}] \label{thm 3.2}
Let $X$ be a Banach space. Assume that $F:X\rightarrow X$ is a completely continuous operator and the set
$V=\{x\in X: x=\lambda{Fx},\ 0<\lambda<1$\} is bounded. Then $F$ has a fixed point in $X$.
\end{theorem}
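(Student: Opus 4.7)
The plan is to derive this statement --- commonly known as Schaefer's fixed point theorem, a special case of the Leray--Schauder alternative --- from Schauder's fixed point theorem via a radial retraction argument. Since $V$ is bounded by hypothesis, I first choose $R>0$ strictly larger than $\sup_{x\in V}\|x\|$, so that $\|x\|<R$ for every $x\in V$, and work in the closed ball $\overline{B}_R=\{x\in X:\|x\|\le R\}$, which is a nonempty, closed, bounded, convex subset of the Banach space $X$.

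The key construction is the radial retraction $\rho:X\to\overline{B}_R$ given by $\rho(x)=x$ when $\|x\|\le R$ and $\rho(x)=Rx/\|x\|$ otherwise; this map is continuous and satisfies $\|\rho(x)\|\le R$. Setting $G:=\rho\circ F$, the composition sends $\overline{B}_R$ into itself, and since $F$ is completely continuous and $\rho$ is continuous (mapping bounded sets to bounded sets), $G$ is again completely continuous. Schauder's fixed point theorem then produces some $x_0\in\overline{B}_R$ with $\rho(F(x_0))=x_0$. The main step is to upgrade this to $x_0=F(x_0)$: arguing by contradiction, if $\|F(x_0)\|>R$ then $x_0=RF(x_0)/\|F(x_0)\|=\lambda_0 F(x_0)$ with $\lambda_0:=R/\|F(x_0)\|\in(0,1)$, placing $x_0$ in $V$; the choice of $R$ then forces $\|x_0\|<R$, while the same explicit formula gives $\|x_0\|=R$, a contradiction. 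Hence $\|F(x_0)\|\le R$, $\rho$ acts as the identity on $F(x_0)$, and $x_0=F(x_0)$.

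The delicate point to watch is the strictness of the bound defining $V$: choosing $R$ merely equal to $\sup_{x\in V}\|x\|$ would collapse the contradiction between $\|x_0\|<R$ and $\|x_0\|=R$, so taking $R$ strictly larger is essential. Apart from this, the argument is mostly bookkeeping around Schauder's theorem; the main alternative, if one prefers not to invoke Schauder directly, is to apply Leray--Schauder degree theory to the admissible homotopy $H(t,x)=tF(x)$ on a ball of radius larger than $\sup_{x\in V}\|x\|$, deducing $\deg(I-F,B_R,0)=\deg(I,B_R,0)=1\ne 0$ and extracting a fixed point from the nontriviality of the degree.
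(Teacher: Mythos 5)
Your argument is correct, but note that the paper does not prove this statement at all: Theorem \ref{thm 3.2} is Schaefer's fixed point theorem, quoted verbatim from Smart's book \cite{Smart} and used as a black box, so there is no in-paper proof to compare against. Your derivation via Schauder's theorem and the radial retraction $\rho(x)=Rx/\max\{R,\|x\|\}$ is the standard one and is complete: the choice of $R$ strictly exceeding $\sup_{x\in V}\|x\|$ is indeed the point where the hypothesis on $V$ enters, and your contradiction between $\|x_0\|=R$ and $\|x_0\|<R$ is exactly right. Two very minor points worth recording: one should dispose of the degenerate case $V=\varnothing$ (any $R>0$ then works, and the same argument applies), and the complete continuity of $G=\rho\circ F$ on $\overline{B}_R$ follows because the continuous image of a relatively compact set is relatively compact, not merely because $\rho$ preserves boundedness. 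The degree-theoretic alternative you sketch is also valid but is strictly more machinery than the problem requires.
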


\begin{theorem} [\cite{Smart}] \label{thm 3.3}
Let $E$ be a closed convex, bounded and nonempty subset of a Banach space $X$. Let $A,B$ be the operators such that
\begin{itemize}
\item [(1)] $Ax+By\in E$, for any $x, y\in E$;
\item [(2)]  $A$ is a completely continuous operator;
\item [(3)] $B$ is a contraction operator.
\end{itemize}
Then there exists at least one fixed point $z\in E$ such that $z = Az + Bz$.
\end{theorem}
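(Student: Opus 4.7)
The plan is to reduce the conclusion to Schauder's fixed point theorem via the classical Krasnoselskii device of absorbing the contraction $B$ into a resolvent. For each fixed $y\in E$, introduce the auxiliary map $\phi_y:X\to X$ defined by $\phi_y(x)=Ay+Bx$. Hypothesis (1) guarantees $\phi_y(E)\subseteq E$, and $\phi_y$ inherits the contraction property of $B$. Since $E$ is a closed subset of the Banach space $X$, it is a complete metric space, so Theorem \ref{thm 3.1} applied to $\phi_y$ provides a unique fixed point $g(y)\in E$ characterised by $g(y)=Ay+Bg(y)$. Any fixed point of $g$ is then automatically a solution of $z=Az+Bz$, so the entire problem is reduced to finding a fixed point of the single-valued map $g:E\to E$.

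The next step is to verify that $g$ is continuous. Subtracting the defining relations for $g(y_1)$ and $g(y_2)$ and using the contraction constant $k<1$ of $B$ produces the estimate $\|g(y_1)-g(y_2)\|\leq \|Ay_1-Ay_2\|+k\|g(y_1)-g(y_2)\|$, hence $\|g(y_1)-g(y_2)\|\leq (1-k)^{-1}\|Ay_1-Ay_2\|$. Continuity (indeed, uniform continuity) of $g$ then follows from the continuity of $A$.

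The key step, where the argument becomes genuinely Krasnoselskii-flavoured rather than a mere application of Banach's principle, is showing that $g(E)$ is relatively compact. Given a sequence $\{y_n\}\subset E$, complete continuity of $A$ extracts a subsequence along which $\{Ay_{n_j}\}$ converges in $X$; the Lipschitz estimate above then forces $\{g(y_{n_j})\}$ to be Cauchy, so it converges, and its limit lies in $E$ because $E$ is closed. Hence $g:E\to E$ is a continuous, compact self-map of a nonempty closed bounded convex set, and Schauder's fixed point theorem yields $z\in E$ with $g(z)=z$. By the defining equation $g(z)=Az+Bg(z)$, this reads $z=Az+Bz$, which is the desired conclusion.

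The main obstacle to anticipate is the simultaneous handling of the contraction estimate and the compactness extraction: the contraction $B$ by itself has no compactness, while $A$ alone does not yield a self-map suited to Schauder's theorem. The bound $(1-k)^{-1}\|Ay_1-Ay_2\|$ is the linchpin that transmits both continuity and relative compactness from $A$ to $g$, and its derivation relies essentially on boundedness of $E$ only insofar as it guarantees that $g(y)$ actually exists in $E$ via hypothesis (1).
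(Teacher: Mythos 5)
The paper does not prove Theorem \ref{thm 3.3}; it is quoted as a known result from Smart's monograph, so there is no internal argument to compare against. Your proposal is the standard (and correct) proof of Krasnoselskii's theorem: the resolvent map $g(y)$ defined by $g(y)=Ay+Bg(y)$ via Theorem \ref{thm 3.1}, the estimate $\|g(y_1)-g(y_2)\|\leq (1-k)^{-1}\|Ay_1-Ay_2\|$ transferring continuity and relative compactness from $A$ to $g$, and Schauder's theorem applied to $g$ on $E$ --- all steps check out, including the use of hypothesis (1) to get $\phi_y(E)\subseteq E$ and the closedness of $E$ to ensure completeness.
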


In the following, for computational convenience, we set
\begin{eqnarray} \label{eq-3.2}
\Omega&=&\frac{1}{\Gamma(q+1)}+\frac{\Gamma(2-\nu)\big((p+1)|\Delta_{1}|+|\Delta_{2}|\big)}{(p+1)|\Delta_{1}\Delta_{3}|}\bigg
(\frac{\beta \eta^{q-\nu}}{\Gamma(q-\nu+1)}+\frac{\xi^{q-1}}{\Gamma(q)}\bigg) \nonumber\\
&&+\frac{\Gamma(p+1)}{|\Delta_{1}|}\bigg(\frac{1}{\Gamma(q+1)}+\frac{\alpha \eta^{p+q}}{\Gamma(p+q+1)}\bigg).
\end{eqnarray}
Now, we are in a position to present the main results of this paper. The first one existence result is based on Banach's contraction mapping principle \ref{thm 3.1}.

\begin{theorem} \label{thm 3.4}
Let $f:[0,1]\times\mathbb{R}\longrightarrow\mathbb{R}$ be a continuous function satisfying the Lipschitz
condition:\\
$(H_{1})$\ $|f(t,x)-f(t,y)|\leq L|x-y|$, \ $L>0$, \ $\forall t\in[0, 1]$,
\ $x,y\in \mathbb{R}$.\\
Then the problem \eqref{eq-1.1}-\eqref{eq-1.2} has a unique solution on $[0,1]$ provided that $L\Omega<1$, where $\Omega$ is given by \eqref{eq-3.2}.
\end{theorem}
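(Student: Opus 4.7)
The plan is to apply the Banach contraction principle (Theorem \ref{thm 3.1}) to the operator $\mathfrak{S}$ defined in \eqref{eq-3.1}, so the whole argument reduces to estimating $\|\mathfrak{S}x-\mathfrak{S}y\|$ in terms of $\|x-y\|$ and showing that the multiplicative constant that emerges is precisely $L\Omega$. Since $\mathcal{C}$ with the sup norm is a complete metric space and fixed points of $\mathfrak{S}$ coincide with solutions of \eqref{eq-1.1}--\eqref{eq-1.2}, once $\mathfrak{S}$ is shown to be a contraction the theorem follows immediately.

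First I would fix $x,y\in\mathcal{C}$ and $t\in[0,1]$ and write out $(\mathfrak{S}x)(t)-(\mathfrak{S}y)(t)$ as the sum of the five integral expressions appearing in \eqref{eq-3.1}, each now involving the difference $f(s,x(s))-f(s,y(s))$. Applying the Lipschitz hypothesis $(H_{1})$ pointwise gives $|f(s,x(s))-f(s,y(s))|\le L\|x-y\|$, which can then be pulled out of every integral. What remains is to bound the kernels. The first term yields $\frac{t^{q}}{\Gamma(q+1)}\le\frac{1}{\Gamma(q+1)}$; the second gives $\frac{\Gamma(p+1)}{|\Delta_{1}|\,\Gamma(q+1)}$; the third and fourth involve the factor $(p+1)\Delta_{1}t+\Delta_{2}$ which I would bound uniformly in $t\in[0,1]$ by $(p+1)|\Delta_{1}|+|\Delta_{2}|$, after which the remaining integrals evaluate to $\frac{\xi^{q-1}}{\Gamma(q)}$ and $\frac{\eta^{q-\nu}}{\Gamma(q-\nu+1)}$ respectively.

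The main computational obstacle is the double integral in the fifth term. Here I would interchange the order of integration (or integrate in $\tau$ first) to get $\int_{0}^{\eta}(\eta-s)^{p-1}\,\frac{s^{q}}{q}\,ds$, then recognize this as a Beta integral and use $B(p,q+1)=\frac{\Gamma(p)\Gamma(q+1)}{\Gamma(p+q+1)}$ to simplify; combined with the prefactor $\frac{\alpha p}{|\Delta_{1}|\,\Gamma(q)}$, this collapses to $\frac{\alpha\,\Gamma(p+1)\,\eta^{p+q}}{|\Delta_{1}|\,\Gamma(p+q+1)}$, matching the last term in \eqref{eq-3.2}. Adding all five bounds gives $|(\mathfrak{S}x)(t)-(\mathfrak{S}y)(t)|\le L\Omega\|x-y\|$, and taking the supremum over $t\in[0,1]$ produces $\|\mathfrak{S}x-\mathfrak{S}y\|\le L\Omega\|x-y\|$.

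Under the assumption $L\Omega<1$, the operator $\mathfrak{S}$ is a strict contraction on $\mathcal{C}$, so Theorem \ref{thm 3.1} yields a unique fixed point, which is the unique solution of \eqref{eq-1.1}--\eqref{eq-1.2}. The only genuinely delicate step is the Beta-function reduction of the double integral; everything else is direct kernel estimation together with the triangle inequality.
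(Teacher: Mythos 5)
Your proposal is correct and follows essentially the same route as the paper: the contraction estimate $\|\mathfrak{S}x-\mathfrak{S}y\|\le L\Omega\|x-y\|$ on all of $\mathcal{C}$, with the five kernel integrals (including the Beta-function reduction of the double integral to $\alpha\Gamma(p+1)\eta^{p+q}/(|\Delta_1|\Gamma(p+q+1))$) matching the terms of $\Omega$ exactly. The paper additionally verifies $\mathfrak{S}B_{\rho}\subset B_{\rho}$ for $\rho\ge\Omega M(1-\Omega L)^{-1}$ before invoking the contraction principle, but since your contraction estimate holds on the whole complete space $\mathcal{C}$, that step is redundant for the stated conclusion and its omission is not a gap.
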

\begin{proof}
Setting $\sup\{|f(t,0)|, t\in[0,1]\}=M<\infty$ and define $B_{\rho}=\{x\in \mathcal{C}:\|x\|\leq \rho\}$, where
\[\rho\geq \Omega{M}(1-\Omega{L})^{-1}.\]
As a first step, we show that $\mathfrak{S}B_{\rho}\subset B_{\rho}$. From $(H_{1})$, for $x\in B_{\rho}$, and $t\in[0,1]$, we get
\begin{gather} \label{eq-3.3}
\begin{aligned}
|f(t,x(t))|& \leq |f(t,x(t))-f(t,0)|+|f(t,0)| \\
&\leq L \|x\|+M\\
&\leq L {\rho}+M.
\end{aligned}
\end{gather}
Using \eqref{eq-3.1} and \eqref{eq-3.3}, we obtain
\begin{eqnarray*}
\|\mathfrak{S}x\|&\leq&\sup_{t\in[0,1]}\bigg\{\int_{0}^{t}\frac{(t-s)^{q-1}}{\Gamma(q)}|f(s,x(s))|ds+\frac{\Gamma(p+1)}{|\Delta_{1}|}\int_{0}^{1}\frac{(1-s)^{q-1}}{\Gamma(q)}|f(s,x(s))|ds\\
&&+\frac{\Gamma(2-\nu)\Big((p+1)|\Delta_{1}|+|\Delta_{2}|\Big)}{(p+1)|\Delta_{1}\Delta_{3}|}\int_{0}^{\xi}\frac{(\xi-s)^{q-2}}{\Gamma(q-1)}|f(s,x(s))|ds \end{eqnarray*}
\begin{eqnarray*}
&&+\frac{\beta\Gamma(2-\nu)\Big((p+1)|\Delta_{1}|+|\Delta_{2}|\Big)}{(p+1)|\Delta_{1}\Delta_{3}|}\int_{0}^{\eta}\frac{(\eta-s)^{q-\nu-1}}{\Gamma(q-\nu)}|f(s,x(s))|ds \\
&&+\frac{\alpha p}{|\Delta_{1}|}\int_{0}^{\eta}\int_{0}^{s}\frac{(\eta-s)^{p-1}(s-\tau)^{q-1}}{\Gamma(q)}|f(\tau,x(\tau))|d\tau ds\bigg\}\\
&\leq& (L {\rho}+M)\sup_{t\in[0,1]}\bigg\{\frac{t^{q}}{\Gamma(q+1)}+
\frac{\Gamma(2-\nu)\Big((p+1)|\Delta_{1}|+|\Delta_{2}|\Big)}{(p+1)\Gamma(q)|\Delta_{1}\Delta_{3}|}\xi^{q-1}\\
&&+\frac{\beta\Gamma(2-\nu)\Big((p+1)|\Delta_{1}|+|\Delta_{2}|\Big)}{(p+1)\Gamma(q-\nu+1)|\Delta_{1}\Delta_{3}|}\eta^{q-\nu}+\frac{\Gamma(p+1)}{\Gamma(q+1)|\Delta_{1}|}\\
&&+\frac{\alpha \Gamma(p+1)}{\Gamma(p+q+1)|\Delta_{1}|}\eta^{p+q} \bigg\}\\
&\leq& (L {\rho}+M) \Omega\leq\rho.
\end{eqnarray*}
Thus, $\mathfrak{S}B_{\rho}\subset B_{\rho}$. Now, for  $x,y\in \mathcal{C}$, we have

\begin{eqnarray*}
\|(\mathfrak{S}x)-(\mathfrak{S}y)\|&\leq&\sup_{t\in[0,1]}\bigg\{\int_{0}^{t}\frac{(t-s)^{q-1}}{\Gamma(q)}ds\\
&&+\frac{\Gamma(p+1)}{|\Delta_{1}|}\int_{0}^{1}\frac{(1-s)^{q-1}}{\Gamma(q)}ds\\
&&+\frac{\Gamma(2-\nu)\Big((p+1)|\Delta_{1}|t+|\Delta_{2}|\Big)}{(p+1)|\Delta_{1}\Delta_{3}|}\int_{0}^{\xi}\frac{(\xi-s)^{q-2}}{\Gamma(q-1)}ds \\
&&+\frac{\beta\Gamma(2-\nu)\Big((p+1)|\Delta_{1}|t+|\Delta_{2}|\Big)}{(p+1)|\Delta_{1}\Delta_{3}|}\int_{0}^{\eta}\frac{(\eta-s)^{q-\nu-1}}{\Gamma(q-\nu)}ds \\
&&+\frac{\alpha p}{|\Delta_{1}|}\int_{0}^{\eta}\int_{0}^{s}\frac{(\eta-s)^{p-1}(s-\tau)^{q-1}}{\Gamma(q)}d\tau ds\bigg\}L\|x-y\|\\
&\leq& L \Omega \|x-y\|.
\end{eqnarray*}
According to the condition $L\Omega<1$, it follows that the operator $\mathfrak{S}$ is a contraction. Therefore, by Theorem \ref{thm 3.1} (Banach's contraction principle) , there exists a unique fixed point in $B_{\rho}$ for the operator $\mathfrak{S}$  which is a unique solution for the problem \eqref{eq-1.1}-\eqref{eq-1.2}. This completes the proof.
\end{proof}

Now, we establish another existence result for BVP \eqref{eq-1.1}-\eqref{eq-1.2} by applying Schaefer's fixed point Theorem \ref{thm 3.2}.

\begin{theorem} \label{thm 3.5}
Let $f:[0,1]\times\mathbb{R}\longrightarrow\mathbb{R}$ be a continuous function satisfying the assumption $(H_{1})$. In addition, it is assumed that\\
$(H_{2})$ there exists a constant  $L>0$ such that $|f(t,x)|\leq L$,\ for all $t\in[0, 1]$, $x\in \mathbb{R} $.\\
Then the problem \eqref{eq-1.1}-\eqref{eq-1.2} has at least one solution on $[0,1]$.
\end{theorem}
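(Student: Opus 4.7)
The plan is to verify the two hypotheses of Schaefer's fixed point theorem (Theorem \ref{thm 3.2}) for the operator $\mathfrak{S}$ defined in \eqref{eq-3.1}: namely, that $\mathfrak{S}$ is completely continuous on $\mathcal{C}$, and that the set $V=\{x\in\mathcal{C}:x=\lambda\mathfrak{S}x,\ 0<\lambda<1\}$ is bounded.

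First I would establish complete continuity of $\mathfrak{S}$ in three standard steps. Continuity of $\mathfrak{S}$ follows from the continuity of $f$ together with the Lebesgue dominated convergence theorem applied termwise to \eqref{eq-3.1}: if $x_n\to x$ in $\mathcal{C}$, then $f(s,x_n(s))\to f(s,x(s))$ uniformly on $[0,1]$, and each integral kernel in \eqref{eq-3.1} is integrable. Uniform boundedness on bounded subsets is obtained by repeating the estimate from the first part of the proof of Theorem \ref{thm 3.4} with $|f(s,x(s))|$ replaced by the constant $L$ from $(H_2)$; this yields $\|\mathfrak{S}x\|\leq L\Omega$ for all $x\in\mathcal{C}$, with $\Omega$ as in \eqref{eq-3.2}.

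The main technical step is equicontinuity. For $0\leq t_1<t_2\leq 1$ and any $x$ with $\|x\|\leq r$ (in fact any $x\in\mathcal{C}$, by $(H_2)$), I would estimate $|(\mathfrak{S}x)(t_2)-(\mathfrak{S}x)(t_1)|$ by splitting the differences term by term. Only two terms in \eqref{eq-3.1} depend on $t$: the leading convolution $\int_0^t(t-s)^{q-1}f(s,x(s))\,ds/\Gamma(q)$, whose increment is bounded by $L\bigl[(t_2^{q}-t_1^{q})+2(t_2-t_1)^{q}\bigr]/\Gamma(q+1)$ via the standard splitting $\int_0^{t_1}+\int_{t_1}^{t_2}$; and the linear-in-$t$ factors $(p+1)\Delta_1 t$ in the third and fourth terms, whose increments are bounded by a constant multiple of $(t_2-t_1)$. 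Both bounds tend to $0$ as $t_2-t_1\to 0$ uniformly in $x$, so $\mathfrak{S}$ sends bounded sets to equicontinuous sets. The Arzel\`a–Ascoli theorem then gives complete continuity.

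For the second hypothesis, suppose $x=\lambda\mathfrak{S}x$ for some $\lambda\in(0,1)$. Then for every $t\in[0,1]$, $|x(t)|=\lambda|(\mathfrak{S}x)(t)|\leq|(\mathfrak{S}x)(t)|$, and applying the uniform bound $\|\mathfrak{S}x\|\leq L\Omega$ established above gives $\|x\|\leq L\Omega$. Hence $V$ is contained in the ball of radius $L\Omega$ and is therefore bounded. Schaefer's theorem (Theorem \ref{thm 3.2}) now yields a fixed point of $\mathfrak{S}$, which by construction is a solution of \eqref{eq-1.1}–\eqref{eq-1.2}. The one step that requires care rather than routine estimation is the equicontinuity bookkeeping for the first term, where the singularity $(t-s)^{q-1}$ at $s=t$ must be handled by the splitting described above; the Lipschitz hypothesis $(H_1)$ is not actually used in this argument, as $(H_2)$ alone suffices for both the uniform bound and the continuity of $\mathfrak{S}$.
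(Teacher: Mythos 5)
Your proposal is correct and follows essentially the same route as the paper: both verify the hypotheses of Schaefer's theorem by establishing continuity of $\mathfrak{S}$, the uniform bound $\|\mathfrak{S}x\|\leq L\Omega$ from $(H_2)$, equicontinuity via the splitting $\int_0^{t_1}+\int_{t_1}^{t_2}$ for the convolution term together with the Lipschitz-in-$t$ behavior of the terms carrying the factor $(p+1)\Delta_1 t$, and then boundedness of $V$ from the same uniform bound. Your closing observation that $(H_2)$ alone suffices and $(H_1)$ is not genuinely needed is accurate and consistent with how the paper's own argument actually proceeds.
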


\begin{proof}
We prove that the operator $\mathfrak{S}$ defined by \eqref{eq-3.1} has a fixed point by utilizing Schaefer's fixed
point theorem. The proof consists of several steps. Firstly, we show that the operator $\mathfrak{S}$ is continuous.

Let $x_{n}$ be a sequence such that $x_{n}\rightarrow x$ in $\mathcal{C}$. Then for each $t\in[0,1]$, we have
\begin{eqnarray*}
|(\mathfrak{S}x_{n})(t)-(\mathfrak{S}x)(t)|&\leq&\bigg\{\int_{0}^{t}\frac{(t-s)^{q-1}}{\Gamma(q)}ds+\frac{\Gamma(p+1)}{|\Delta_{1}|}\int_{0}^{1}\frac{(1-s)^{q-1}}{\Gamma(q)}ds\\
&&+\frac{\Gamma(2-\nu)\Big((p+1)|\Delta_{1}|t+|\Delta_{2}|\Big)}{(p+1)|\Delta_{1}\Delta_{3}|}\int_{0}^{\xi}\frac{(\xi-s)^{q-2}}{\Gamma(q-1)}ds \\
&&+\frac{\beta\Gamma(2-\nu)\Big((p+1)|\Delta_{1}|t+|\Delta_{2}|\Big)}{(p+1)|\Delta_{1}\Delta_{3}|}\int_{0}^{\eta}\frac{(\eta-s)^{q-\nu-1}}{\Gamma(q-\nu)}ds \\
&&+\frac{\alpha p}{|\Delta_{1}|}\int_{0}^{\eta}\int_{0}^{s}\frac{(\eta-s)^{p-1}(s-\tau)^{q-1}}{\Gamma(q)}d\tau ds\bigg\}\\
&&\times\|f(. , x_{n}(.))-f(. , x(.))\|\\
&\leq& \Omega \|f(. , x_{n}(.))-f(. , x(.))\|.
\end{eqnarray*}
Since $f$ is continuous , then $\|\mathfrak{S}x_{n}-\mathfrak{S}x\|\rightarrow 0$ as $n \rightarrow \infty$. Therefore $\mathfrak{S}$ is continuous.

Now, it will be shown that $\mathfrak{S}$  maps bounded sets into bounded sets in $\mathcal{C}$.\
For $\rho>0$, let $B_{\rho}=\{x\in \mathcal{C}:\|x\|\leq \rho\}$ be bounded set in $\mathcal{C}$. \
In view of the condition $(H_{2})$, it is easy to establish that $\|\mathfrak{S}x\|\leq L \Omega=M$, $x\in B_{\rho}$.

Thus $\mathfrak{S}$ is uniformly bounded on $B_{\rho}$. Moreover, for $t_{1}, t_{2}\in[0,1]$ with $t_{1}<t_{2}$ and $x\in B_{\rho}$, we get the following estimates
\begin{eqnarray*}
|(\mathfrak{S}x)(t_{2})-(\mathfrak{S}x)(t_{1})|&\leq&\int_{0}^{t_{1}}\frac{(t_{2}-s)^{q-1}-(t_{1}-s)^{q-1}}{\Gamma(q)}|f(s,x(s))|ds\\
&&+\int_{t_{1}}^{t_{2}}\frac{(t_{2}-s)^{q-1}}{\Gamma(q)}|f(s,x(s))|ds\\
&&+\frac{\beta\Gamma(2-\nu)(t_{2}-t_{1})}{\Gamma(q-\nu)|\Delta_{3}|}\int_{0}^{\eta}(\eta-s)^{q-\nu-1}|f(s,x(s))|ds \\
&&+\frac{\Gamma(2-\nu)(t_{2}-t_{1})}{\Gamma(q-1)|\Delta_{3}|}\int_{0}^{\xi}(\xi-s)^{q-2}|f(s,x(s))|ds \\
&\leq& L\bigg\{\frac{1}{\Gamma(q+1)}\left[(t_{2}^{q}-t_{1}^{q})+2(t_{2}-t_{1})^{q}\right]+\frac{\Gamma(2-\nu)}{|\Delta_{3}|}\\
&&\times\bigg[\frac{\beta \eta^{q-\nu}}{\Gamma(q-\nu+1)}+\frac{\xi^{q-1}}{\Gamma(q)}\bigg](t_{2}-t_{1})\bigg\}
\end{eqnarray*}
As $t_{2}\rightarrow t_{1}$, the right-hand side tends to zero independently of $x\in B_{\rho}$. Thus, by the Arzel\'{a}-Ascoli theorem, the operator $\mathfrak{S}$ is completely continuous.

Next, we need to show that the set $\mathcal{V}=\{x\in \mathcal{C}: x=\lambda{\mathfrak{S}}x,\ 0<\lambda<1\}$ is bounded.

Let $x\in \mathcal{V}$ and $t\in[0,1]$. Then
\begin{eqnarray*}
x(t)&=&\lambda\bigg\{\int_{0}^{t}\frac{(t-s)^{q-1}}{\Gamma(q)}f(s,x(s))ds-\frac{\Gamma(p+1)}{\Delta_{1}}\int_{0}^{1}\frac{(1-s)^{q-1}}{\Gamma(q)}f(s,x(s))ds \nonumber\\
&&+\frac{\Gamma(2-\nu)\Big((p+1)\Delta_{1}t+\Delta_{2}\Big)}{(p+1)\Delta_{1}\Delta_{3}}\int_{0}^{\xi}\frac{(\xi-s)^{q-2}}{\Gamma(q-1)}f(s,x(s))ds \nonumber\\
&&-\frac{\beta\Gamma(2-\nu)\Big((p+1)\Delta_{1}t+\Delta_{2}\Big)}{(p+1)\Delta_{1}\Delta_{3}}\int_{0}^{\eta}\frac{(\eta-s)^{q-\nu-1}}{\Gamma(q-\nu)}f(s,x(s))ds \nonumber\\
&&+\frac{\alpha p}{\Delta_{1}}\int_{0}^{\eta}\int_{0}^{s}\frac{(\eta-s)^{p-1}(s-\tau)^{q-1}}{\Gamma(q)}f(\tau,x(\tau))d\tau ds\bigg\},
\end{eqnarray*}
which implies using $\lambda<1$ that
\[\|x\|=\sup_{t\in[0,1]}\{|\lambda( \mathfrak{S}x)(t)|\}\leq L\Omega=M.\]
Therefore, $\mathcal{V}$ is bounded. By Schaefer's fixed point Theorem \ref{thm 3.2}, we conclude that the operator $\mathfrak{S}$ has a fixed point which is a solution of the fractional order boundary value problem \eqref{eq-1.1}-\eqref{eq-1.2}. This completes the proof.
\end{proof}

Our next result on existence and uniqueness is based on Krasnoselskii's fixed point Theorem \ref{thm 3.3}.

\begin{theorem}\label{thm 3.6}
Let $f:[0,1]\times\mathbb{R}\longrightarrow\mathbb{R}$ be a continuous function satisfying the assumption $(H_{1})$. Moreover, it is assumed that\\
$(H_{3})$ $|f(t,x)|\leq \sigma(t)$,\ $\forall (t,x)\in[0,1]\times \mathbb{R}$, where ${\sigma}\in C([0, 1], \mathbb{R^{+}})$.\\
Then the boundary value problem \eqref{eq-1.1}-\eqref{eq-1.2} has at least one solution on $[0,1]$ if
\[L \bigg(\Omega-\frac{1}{\Gamma(q+1)}\bigg)<1,\]
where $\Omega$ is given by \eqref{eq-3.2}.
\end{theorem}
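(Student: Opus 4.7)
The plan is to apply Krasnoselskii's fixed point Theorem~\ref{thm 3.3}. Set $\|\sigma\|=\sup_{t\in[0,1]}\sigma(t)$, fix a radius $r\geq \|\sigma\|\Omega$, and take $E=B_{r}=\{x\in\mathcal{C}:\|x\|\leq r\}$, which is a closed, convex, bounded, nonempty subset of the Banach space $\mathcal{C}$. The key move is to split the operator $\mathfrak{S}$ from \eqref{eq-3.1} as $\mathfrak{S}=A+B$, where
\begin{equation*}
(Ax)(t)=\int_{0}^{t}\frac{(t-s)^{q-1}}{\Gamma(q)}f(s,x(s))\,ds,
\end{equation*}
and $B$ collects the remaining four terms of \eqref{eq-3.1} (the ones with upper limits $1$, $\xi$, $\eta$, $\eta$). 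This split is natural because the $t$-dependence that fuels the Arzel\`a--Ascoli argument lives entirely in $A$, while the ``constant in the kernel'' terms of $B$ depend on $t$ only through the linear factor $(p+1)\Delta_{1}t+\Delta_{2}$, which is harmless for the contraction estimate.

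First I would verify condition (1) of Theorem~\ref{thm 3.3}: for $x,y\in B_{r}$, using $(H_{3})$ to bound $|f(s,x(s))|\leq\sigma(s)\leq\|\sigma\|$ inside each of the five integrals appearing in $Ax+By$, the very same computation carried out in the proof of Theorem~\ref{thm 3.4} yields
\begin{equation*}
\|Ax+By\|\leq \|\sigma\|\,\Omega\leq r,
\end{equation*}
so $Ax+By\in B_{r}$. Next, for condition (3), I would estimate $\|Bx-By\|$ using $(H_{1})$. Each of the four integrals comprising $B$ contributes to the constant $\Omega$ defined in \eqref{eq-3.2}, but the first summand $\tfrac{1}{\Gamma(q+1)}$ of $\Omega$ comes precisely from the piece that was assigned to $A$. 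Hence the routine estimate (taking the supremum over $t\in[0,1]$ so that $(p+1)|\Delta_{1}|t+|\Delta_{2}|\leq(p+1)|\Delta_{1}|+|\Delta_{2}|$) gives
\begin{equation*}
\|Bx-By\|\leq L\Bigl(\Omega-\tfrac{1}{\Gamma(q+1)}\Bigr)\|x-y\|,
\end{equation*}
and the hypothesis $L\bigl(\Omega-\tfrac{1}{\Gamma(q+1)}\bigr)<1$ makes $B$ a contraction.

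The main technical step is condition (2), complete continuity of $A$. Continuity of $A$ follows from continuity of $f$ and dominated convergence (the bound $\sigma$ provides the needed dominating function). Uniform boundedness on $B_{r}$ is immediate from $(H_{3})$, since $\|Ax\|\leq \|\sigma\|/\Gamma(q+1)$. The nontrivial point is equicontinuity: for $0\leq t_{1}<t_{2}\leq 1$ and $x\in B_{r}$,
\begin{equation*}
|(Ax)(t_{2})-(Ax)(t_{1})|\leq \int_{0}^{t_{1}}\frac{(t_{2}-s)^{q-1}-(t_{1}-s)^{q-1}}{\Gamma(q)}\sigma(s)\,ds+\int_{t_{1}}^{t_{2}}\frac{(t_{2}-s)^{q-1}}{\Gamma(q)}\sigma(s)\,ds,
\end{equation*}
and this is bounded, exactly as in the proof of Theorem~\ref{thm 3.5}, by $\tfrac{\|\sigma\|}{\Gamma(q+1)}\bigl[(t_{2}^{q}-t_{1}^{q})+2(t_{2}-t_{1})^{q}\bigr]$, which tends to $0$ as $t_{2}\to t_{1}$ uniformly in $x\in B_{r}$. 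Arzel\`a--Ascoli then gives complete continuity of $A$. All three hypotheses of Theorem~\ref{thm 3.3} being satisfied, $\mathfrak{S}=A+B$ has a fixed point in $B_{r}$, which is a solution of \eqref{eq-1.1}--\eqref{eq-1.2}.

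The place where I expect to have to be careful is the bookkeeping in the splitting: it is essential that the term $\tfrac{1}{\Gamma(q+1)}$ removed from $\Omega$ in the contraction hypothesis corresponds \emph{exactly} to the contribution of $A$ (so that $B$ inherits constant $\Omega-\tfrac{1}{\Gamma(q+1)}$), and that, at the same time, the full constant $\Omega$ is still available to absorb $\|Ax+By\|$ in the invariance step. Once the split is written down and the estimates in Theorem~\ref{thm 3.4} are replayed term by term, both points fall out cleanly.
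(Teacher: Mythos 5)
Your proposal is correct and follows essentially the same route as the paper: the paper also applies Krasnoselskii's theorem with exactly this decomposition ($\mathfrak{S}_{1}=A$ being the Volterra term and $\mathfrak{S}_{2}=B$ the remaining four integrals), the same invariance bound $\|\mathfrak{S}_{1}x+\mathfrak{S}_{2}y\|\leq\Omega\|\sigma\|$, the same contraction constant $L\left(\Omega-1/\Gamma(q+1)\right)$, and the same Arzel\`a--Ascoli argument for complete continuity of the first piece. No substantive differences to report.
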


\begin{proof}
If we denote $B_{\rho}=\{x\in \mathcal{C}:\|x\|\leq\rho\}$, where $\rho\geq\Omega\|\sigma\|$ with $\|\sigma\|=\sup_{t\in[0,1]}|\sigma(t)|$, and $\Omega$ is given by \eqref{eq-3.2}. Then $B_{\rho}$ is a bounded closed convex subset of $\mathcal{C}$.

For $t\in[0,1]$, we define two operators on $B_{\rho}$ as
\begin{eqnarray*}
(\mathfrak{S_{1}}x)(t)&=&\int_{0}^{t}\frac{(t-s)^{q-1}}{\Gamma(q)}f(s,x(s))ds\\
(\mathfrak{S_{2}}x)(t)&=&\frac{\Gamma(2-\nu)\Big((p+1)\Delta_{1}t+\Delta_{2}\Big)}{(p+1)\Delta_{1}\Delta_{3}}\int_{0}^{\xi}\frac{(\xi-s)^{q-2}}{\Gamma(q-1)}f(s,x(s))ds
 \nonumber\\
&&+\frac{\alpha p}{\Delta_{1}}\int_{0}^{\eta}\int_{0}^{s}\frac{(\eta-s)^{p-1}(s-\tau)^{q-1}}{\Gamma(q)}f(\tau,x(\tau))d\tau ds
\nonumber\\
&&-\frac{\Gamma(p+1)}{\Delta_{1}}\int_{0}^{1}\frac{(1-s)^{q-1}}{\Gamma(q)}f(s,x(s))ds\nonumber\\
&&-\frac{\beta\Gamma(2-\nu)\Big((p+1)\Delta_{1}t+\Delta_{2}\Big)}{(p+1)\Delta_{1}\Delta_{3}}\int_{0}^{\eta}\frac{(\eta-s)^{q-\nu-1}}{\Gamma(q-\nu)}f(s,x(s))ds.
\end{eqnarray*}
For $x,y\in B_{\rho}$, we find that $\|\mathfrak{S_{1}}x+\mathfrak{S_{2}}y\|\leq \Omega\|\sigma\|\leq\rho$, which implies that $\mathfrak{S_{1}}x+\mathfrak{S_{2}}y\in  B_{\rho}$.

Using $(H_{1})$ and \eqref{eq-3.2}, for  $x,y\in \mathcal{C}$, we obtain
\begin{eqnarray*}
\|(\mathfrak{S_{2}}x)-(\mathfrak{S_{2}}y)\|&\leq&\sup_{t\in[0,1]}\bigg\{\frac{\Gamma(2-\nu)\Big((p+1)|\Delta_{1}|t+|\Delta_{2}|\Big)}{(p+1)|\Delta_{1}\Delta_{3}|}\int_{0}^{\xi}\frac{(\xi-s)^{q-2}}{\Gamma(q-1)}ds \\
&&+\frac{\alpha p}{|\Delta_{1}|}\int_{0}^{\eta}\int_{0}^{s}\frac{(\eta-s)^{p-1}(s-\tau)^{q-1}}{\Gamma(q)}d\tau ds
\\
&&+\frac{\Gamma(p+1)}{|\Delta_{1}|}\int_{0}^{1}\frac{(1-s)^{q-1}}{\Gamma(q)}ds
 \\
&&+\frac{\beta\Gamma(2-\nu)\Big((p+1)|\Delta_{1}|t+|\Delta_{2}|\Big)}{(p+1)|\Delta_{1}\Delta_{3}|}\int_{0}^{\eta}\frac{(\eta-s)^{q-\nu-1}}{\Gamma(q-\nu)}ds  \bigg\}L\|x-y\|\\
&\leq& L \left(\Omega-1/\Gamma(q+1)\right) \|x-y\|,
\end{eqnarray*}
which shows that the operator $\mathfrak{S_{2}}$ is a contraction since $L (\Omega-1/\Gamma(q+1))<1$.

For $x\in B_{\rho}$, we have
\[\|\mathfrak{S_{1}}x\|\leq\sup_{t\in[0,1]}\bigg\{\int_{0}^{t}\frac{(t-s)^{q-1}}{\Gamma(q)}|f(s,x(s))|ds\bigg\}\leq \frac{\|\sigma\|}{\Gamma(q+1)}.\]
Therefore, $\mathfrak{S_{1}}$ is uniformly bounded on $ B_{\rho}$. Now, we prove the compactness of the operator $\mathfrak{S_{1}}$.

Let $t_{1}, t_{2} \in [0; 1]$ with $t_{1}<t_{2}$  and $x\in B_{\rho}$. Then, we obtain
\begin{eqnarray*}
|(\mathfrak{S_{1}}x)(t_{2})-(\mathfrak{S_{1}}x)(t_{1})|&\leq&\int_{0}^{t_{1}}\frac{(t_{2}-s)^{q-1}-(t_{1}-s)^{q-1}}{\Gamma(q)}|f(s,x(s))|ds\\
&&+\int_{t_{1}}^{t_{2}}\frac{(t_{2}-s)^{q-1}}{\Gamma(q)}|f(s,x(s))|ds\\
&\leq& \frac{\tilde{f}}{\Gamma(q+1)}\bigg((t_{2}^{q}-t_{1}^{q})+2(t_{2}-t_{1})^{q}\bigg),
\end{eqnarray*}
where $\sup_{(t,x)\in[0,1]\times {B_{\rho}}}|f(t,x)|=\tilde{f}$. Obviously, the right-hand side of the above inequality tends to zero independently of $x\in B_{\rho}$ as $t_{2}\rightarrow t_{1}$. So $\mathfrak{S_{1}}$ is relatively compact on
$B_{\rho}$ . Hence, by the Arzel\'{a}-Ascoli theorem, $\mathfrak{S_{1}}$ is compact on $B_{\rho}$. Continuity of $f$ implies that the operator $\mathfrak{S_{1}}$ is continuous. Therefore, $\mathfrak{S_{1}}$ is completely continuous.
Thus all the hypothesis of Theorem \ref{thm 3.3} are satisfied and consequently the
problem \eqref{eq-1.1}-\eqref{eq-1.2} has at least one solution on $[0,1]$. This completes the proof.
\end{proof}

\section{Examples\label{sec:4}}

\begin{exmp}
Consider the following fractional boundary value problem

\begin{equation} \label{eq-4.1}
\begin{cases}
^{c}D^{\frac{4}{3}}x(t)=\frac{e^{-\cos^{2}t}}{(35e^{t}+1)\sqrt{t+16}}\sin{x}+\frac{\sqrt{2}}{2}\frac{t}{t+1},\ t\in[0,1],
\\
x^{\prime}(\frac{1}{3})=\ ^{c}D^{\frac{1}{2}}x(\frac{1}{2}), \ x(1)=3 [I^{\frac{3}{2}}x](\frac{1}{2}).
\end{cases}
\end{equation}

Here, $\alpha=3$, $\beta=1$, $\eta=\frac{1}{2}$, $\xi=\frac{1}{3}$, $q=\frac{4}{3}$, $p=\frac{3}{2}$, $\nu=\frac{1}{2}$, and $f(t,x)=\frac{e^{-\cos^{2}t}}{(35e^{t}+1)\sqrt{t+16}}\sin{x}+\frac{\sqrt{2}}{2}\frac{t}{t+1}$. With the given values, it is easy to see that $\alpha=3\neq\frac{\Gamma(p+1)}{\eta^{p}}=3\sqrt{\frac{\pi}{2}}$, $\beta=1\neq\frac{\Gamma(2-\nu)}{\eta^{1-\nu}}=\sqrt{\frac{\pi}{2}}$, $\Delta_{1}=\frac{3}{2}|\Delta_{3}|$ with $|\Delta_{3}|=\frac{1}{2}(\sqrt{\pi}-\sqrt{2})$, $|\Delta_{2}|=\frac{3}{8}(5\sqrt{\pi}-\sqrt{2})$, and $\Omega\simeq 40.4684$. \\
Clearly, $L=\frac{1}{144}$ as $|f(t,x)-f(t,y)|\leq \frac{1}{144}|x-y|$. Furthermore, upon computation, we get
\begin{eqnarray*}
L\Omega&=& L\bigg\{\frac{1}{\Gamma(q+1)}+\frac{\Gamma(2-\nu)\big((p+1)|\Delta_{1}|+|\Delta_{2}|\big)}{(p+1)|\Delta_{1}\Delta_{3}|}\bigg
(\frac{\beta \eta^{q-\nu}}{\Gamma(q-\nu+1)}+\frac{\xi^{q-1}}{\Gamma(q)}\bigg) \nonumber\\
&&+\frac{\Gamma(p+1)}{|\Delta_{1}|}\bigg(\frac{1}{\Gamma(q+1)}+\frac{\alpha \eta^{p+q}}{\Gamma(p+q+1)}\bigg)\bigg\} \\
&\simeq& 0.2810<1 .
\end{eqnarray*}
Thus, for the given boundary value problem \eqref{eq-4.1}, all the conditions of Theorem \ref{thm 3.4} are satisfied. So, by Theorem \ref{thm 3.4}, there exists a unique solution for the problem \eqref{eq-4.1} on $[0,1]$.

\end{exmp}

\begin{exmp}
Consider a fractional boundary value problem given by

\begin{equation} \label{eq-4.2}
\begin{cases}
^{c}D^{\frac{3}{2}}x(t)=\frac{e^{-2t}[2+\sin(t^{2}-t)]}{(3+|\cos{x}|)(4+t^{3}x^{2})^{2}},\ t\in[0,1],
\\
x^{\prime}(\frac{1}{5})= \frac{1}{4} \ ^{c}D^{\frac{2}{3}}x(\frac{3}{4}), \ x(1)= \frac{1}{2}[I^{\frac{4}{3}}x](\frac{3}{4}),
\end{cases}
\end{equation}
where,  $\alpha=\frac{1}{2}$, $\beta=\frac{1}{4}$, $\eta=\frac{3}{4}$, $\xi=\frac{1}{5}$, $q=\frac{3}{2}$, $p=\frac{4}{3}$, $\nu=\frac{2}{3}$, and $f(t,x)=\frac{e^{-2t}[2+\sin(t^{2}-t)]}{(3+|\cos{x}|)(4+t^{3}x^{2})^{2}}$. By simple calculations, we find that
$ \frac{\Gamma(2-\nu)}{\eta^{1-\nu}}=  \frac{(1/3)\Gamma(1/3)}{(\frac{3}{4})^{\frac{1}{3}}}$, $   \frac{\Gamma(p+1)}{\eta^{p}}= (\frac{4}{3})^{2} \frac{\Gamma(2-\nu)}{\eta^{1-\nu}}$. Then,
\[\alpha=\frac{1}{2}\neq\frac{\Gamma(p+1)}{\eta^{p}}\simeq 1.7473, \ \text{and} \ \beta=\frac{1}{4}\neq\frac{\Gamma(2-\nu)}{\eta^{1-\nu}}\simeq 0.9829.\]

We easily get $|f(t,x)|\leq\frac{1}{16}$. Hence, all the conditions of Theorem \ref{thm 3.5} are satisfied. Thus, by Theorem \ref{thm 3.5} the fractional order boundary value problem \eqref{eq-4.2} has at least one solution on $[0, 1]$:
\end{exmp}

\begin{exmp}
As a third example we consider the fractional boundary value problem
\begin{equation} \label{eq-4.3}
\begin{cases}
^{c}D^{\frac{5}{4}}x(t)=\frac{e^{-t}\cos(t\sqrt{2})}{(1+|x|)(2+e^{t})^{2}\sqrt{t+25}}+\frac{t}{t+1},\ t\in[0,1],
\\
x^{\prime}(\frac{2}{5})= \frac{3}{2} \ ^{c}D^{\frac{1}{4}}x(\frac{5}{7}), \ x(1)= 2[I^{\frac{5}{3}}x](\frac{5}{7}),
\end{cases}
\end{equation}
where, $\alpha=2$, $\beta=\frac{3}{2}$, $\eta=\frac{5}{7}$, $\xi=\frac{2}{5}$, $q=\frac{5}{4}$, $p=\frac{5}{3}$, $\nu=\frac{1}{4}$, and $f(t,x)=\frac{e^{-t}\cos(t\sqrt{2})}{(1+|x|)(2+e^{t})^{2}\sqrt{t+25}}+\frac{t}{t+1}$. With the given values, it is found that $\alpha=2\neq\frac{\Gamma(p+1)}{\eta^{p}}=\frac{(10/9)\Gamma(2/3)}{(\frac{5}{7})^{\frac{5}{3}}}\simeq 2.6361$, $\beta=\frac{3}{2}\neq\frac{\Gamma(2-\nu)}{\eta^{1-\nu}}=\frac{(3/4)\Gamma(3/4)}{(\frac{5}{7})^{\frac{3}{4}}}\simeq 1.1829$, $\Delta_{1}=0.3631$, $|\Delta_{2}|=3.1968$, $\Delta_{3}=0.2464$, and $\big(\Omega-\frac{1}{\Gamma(q+1)}\big)\simeq 35.5044$. \\
Since $|f(t,x)-f(t,y)|\leq \frac{1}{45}|x-y|$, then $(H_{1})$ is satisfied with $L=\frac{1}{45}$. Further,
\begin{equation*}
|f(t,x)|=\bigg|\frac{e^{-t}\cos(t\sqrt{2})}{(1+|x|)(2+e^{t})^{2}\sqrt{t+25}}+\frac{t}{t+1}\bigg|\leq \frac{e^{-t}}{45}+\frac{t}{t+1}.
\end{equation*}
Obviously all the conditions of Theorem \ref{thm 3.6} are satisfied with $L(\Omega-1/\Gamma(q+1))\simeq 0.7890<1$.
Hence, by Theorem \ref{thm 3.6}, the fractional order boundary value problem \eqref{eq-4.3} has at least one solution on $[0, 1]$.
\end{exmp}




\end{document}